\numberwithin{equation}{section}
\newtheorem{theorem}{Theorem}[section]
\newtheorem{proposition}[theorem]{Proposition}
\newtheorem{definition}{Definition}
\newtheorem{remark}[theorem]{Remark}
\newenvironment{proof}[1][Proof]{\textbf{#1.} }{\ \rule{0.5em}{0.5em}}
\newcommand{\var}{{\rm Var} \mspace{1mu}}
\begin{document}
\title{Efficient simulation for branching linear recursions}

\author{Ningyuan Chen and Mariana Olvera-Cravioto \\ [12pt]
Industrial Engineering and Operations Research\\
Columbia University \\
New York, NY 10027, USA\\
}

\maketitle

\begin{abstract}
We consider a linear recursion of the form
$$R^{(k+1)}\stackrel{\mathcal D}{=}\sum_{i=1}^{N}C_iR^{(k)}_i+Q,$$
where $(Q,N,C_1,C_2,\dots)$ is a real-valued random vector with $N\in\mathbb{N}=\{0, 1, 2, \dots\}$, $\{R^{(k)}_i\}_{i\in\mathbb{N}}$ is a sequence of i.i.d. copies of $R^{(k)}$, independent of $(Q,N,C_1,C_2,\dots)$, and $\stackrel{\mathcal{D}}{=}$ denotes equality in distribution. For suitable vectors $(Q,N,C_1,C_2,\dots)$ and provided the initial distribution of $R^{(0)}$ is well-behaved, the process $R^{(k)}$ is known to converge to the endogenous solution of the corresponding stochastic fixed-point equation, which appears in the analysis of information ranking algorithms, e.g., PageRank, and in the complexity analysis of divide and conquer algorithms, e.g. Quicksort. Naive Monte Carlo simulation of $R^{(k)}$ based on the branching recursion has exponential complexity in $k$, and therefore the need for efficient methods. We propose in this paper an iterative bootstrap algorithm that has linear complexity and can be used to approximately sample $R^{(k)}$. We show the consistency of estimators based on our proposed algorithm.
\end{abstract}

\section{Introduction}
\label{sec:intro}

The complexity analysis of divide and conquer algorithms such as Quicksort \cite{Rosler_91,Fill_Jan_01,Ros_Rus_01} and the more recent analysis of information ranking algorithms on complex graphs (e.g., Google's PageRank) \cite{Volk_Litv_08,Jel_Olv_10,Chen_Litv_Olv_14} motivate the analysis of the stochastic fixed-point equation
\begin{equation}\label{eq:SFPE}
    R\stackrel{\mathcal D}{=} \sum_{r=1}^{N} C_r R_r + Q,
\end{equation}
where $(Q, N, C_1, C_2, \dots)$ is a real-valued random vector with $N \in \mathbb{N}$, and $\{R_i\}_{i \in \mathbb{N}}$ is a sequence of i.i.d. copies of $R$, independent of $(Q, N, C_1, C_2, \dots)$. More precisely, the number of comparisons required in Quicksort for sorting an array of length $n$, properly normalized, satisfies in the limit as the array's length grows to infinity a distributional equation of the form in \eqref{eq:SFPE}. In the context of ranking algorithms, it has been shown that the rank of a randomly chosen node in a large directed graph with $n$ nodes converges in distribution, as the size of the graph grows, to $R$, where $N$ represents the in-degree of the chosen node and the $\{C_i\}_{i \geq 1}$ are functions of the out-degrees of its neighbors.

Although equation \eqref{eq:SFPE} is known to have multiple solutions, and an extensive amount of literature has been devoted to their characterization (see e.g. \cite{Als_Big_Mei_10,Als_Big_Mei_10,Alsm_Mein_10b} and the references therein), in applications we are often interested only in the so-called endogenous solution. This solution can be shown to be the unique limit under iterations of the distributional recursion
\begin{equation} \label{eq:Iterations}
R^{(k+1)} \stackrel{\mathcal D}{=} \sum_{i=1}^{N} C_i R^{(k)}_i + Q,
\end{equation}
where $(Q,N,C_1,C_2,\dots)$ is a real-valued random vector with $N\in\mathbb{N}$, and $\{R^{(k)}_i\}_{i\in\mathbb{N}}$ is a sequence of i.i.d. copies of $R^{(k)}$, independent of $(Q,N,C_1,C_2,\dots)$, provided one starts with an initial distribution for $R^{(0)}$ with sufficient finite moments (see, e.g., Lemma 4.5 in \cite{Jel_Olv_12b}).  Moreover, asymptotics for the tail distribution of the endogenous solution $R$ are available under several different sets of assumptions for $(Q, N, C_1, C_2, \dots)$ \cite{Jel_Olv_10,Jel_Olv_12a,Jel_Olv_12b,Olvera_12}. However, no approximations exist for the distribution of $R$  besides its tail behavior, and even the calculation of its non-integer/absolute moments can be difficult. Hence the need to design efficient numerical methods to compute relevant statistics.

As will be discussed later, the endogenous solution to \eqref{eq:SFPE} can be explicitly constructed on a weighted branching process. Thus, drawing some similarities with the analysis of branching processes, and the Galton-Watson process in particular, one could think of using the Laplace transform of $R$ to obtain its distribution. Unfortunately, the presence of the weights $\{C_i\}$ in the Laplace transform
\[
 \varphi(s) =   E\left[ \exp\left( -s R \right) \right] = E\left[\exp\left( -sQ \right) \prod_{i=1}^N \varphi( sC_i)     \right]
\]
 makes its inversion problematic, making a simulation approach even more important.

 The first observation we make regarding the simulation of $R$, is that when $P(Q = 0) < 1$ it is enough to be able to approximate $R^{(k)}$ for fixed values of $k$, since both $R^{(k)}$ and $R$ can be constructed in the same probability space in such a way that the difference $|R^{(k)} - R|$ is geometrically small. More precisely, under very general conditions (see Proposition \ref{P.MomentsR} in Section \ref{sec:wbp}), there exist positive constants $K < \infty$ and $c < 1$ such that
 \begin{equation} \label{eq:geometricTail}
E\left[ \left| R^{(k)} - R \right|^\beta \right] \leq K c^{k+1}.
\end{equation}
Our goal is then to simulate $R^{(k)}$ for a suitably large value of $k$.

The simulation of $R^{(k)}$ is not that straightforward either, since naive Monte Carlo using \eqref{eq:Iterations} starting from some initial distribution $R^{(0)}$ implies the computation of a geometric number of copies of $(Q, N, C_1, C_2, \dots)$, of order $(E[N])^k$ when $E[N] > 1$, which is usually the case in the applications we are interested in. Hence, the naive simulation approach  can be prohibitive. Instead, we propose in this paper an iterative bootstrap algorithm that outputs a sample pool of observations $\{ \hat R^{(k,m)}_i \}_{i=1}^m$ whose empirical distribution converges, in the Kantorovich-Rubinstein distance, to that of $R^{(k)}$ as the size of the pool $m \to \infty$. This mode of convergence is equivalent to weak convergence and convergence of the first absolute moments (see, e.g., \cite{Villani_2009}). Moreover, the complexity of our proposed algorithm is linear in $k$.

The paper is organized as follows. Section~\ref{sec:wbp} describes the weighted branching process and the linear recursion. The algorithm itself is given in Section~\ref{sec:algorithm} .
Section~\ref{sec:convergence} introduces the Kantorovich-Rubinstein distance and proves the convergence properties of our proposed algorithm. Numerical examples to illustrate the precision of the algorithm are presented in Section~\ref{sec:numerical}.

\section{Linear recursions on weighted branching processes} \label{sec:wbp}

As mentioned in the introduction, the endogenous solution to \eqref{eq:SFPE} can be explicitly constructed on a weighted branching process. To describe the structure of a weighted branching process, let  $\mathbb{N}_+ = \{1, 2, 3, \dots\}$ be the set of positive integers and let $U = \bigcup_{k=0}^\infty (\mathbb{N}_+)^k$ be the set of all finite sequences ${\bf i} = (i_1, i_2, \dots, i_n)$, $n\ge 0$, where by convention $\mathbb{N}_+^0 = \{ \emptyset\}$ contains the null sequence $\emptyset$. To ease the exposition, we will use $({\bf i}, j) = (i_1,\dots, i_n, j)$ to denote the index concatenation operation.

Next, let $(Q, N, C_1, C_2, \dots)$ be a real-valued vector with $N \in \mathbb{N}$. We will refer to this vector as the generic branching vector. Now let $\{ (Q_{\bf i}, N_{\bf i}, C_{({\bf i}, 1)}, C_{({\bf i}, 2)}, \dots ) \}_{{\bf i} \in U}$ be a sequence of i.i.d. copies of the generic branching vector. To construct a weighted branching process we start by defining a tree as follows: let $A_0 = \{ \emptyset\}$ denote the root of the tree, and define the $n$th generation according to the recursion
$$A_n = \{ ({\bf i}, i_n) \in U:  {\bf i} \in A_{n-1}, 1 \leq i_n \leq N_{\bf i} \}, \quad n \geq 1.$$
Now, assign to each node ${\bf i}$ in the tree a weight $\Pi_{\bf i}$ according to the recursion
$$\Pi_\emptyset \equiv 1, \qquad \Pi_{({\bf i}, i_n)} = C_{({\bf i}, i_n)} \Pi_{\bf i}, \qquad n \geq 1, $$
see Figure \ref{F.Tree}. If $P(N< \infty)=1$ and $C_i \equiv 1$ for all $i \geq 1$, the weighted branching process reduces to a Galton-Watson process.

\begin{figure}[t]
\centering
\begin{picture}(350,110)(0,0)
\put(0,0){\includegraphics[scale = 0.75, bb = 30 560 510 695, clip]{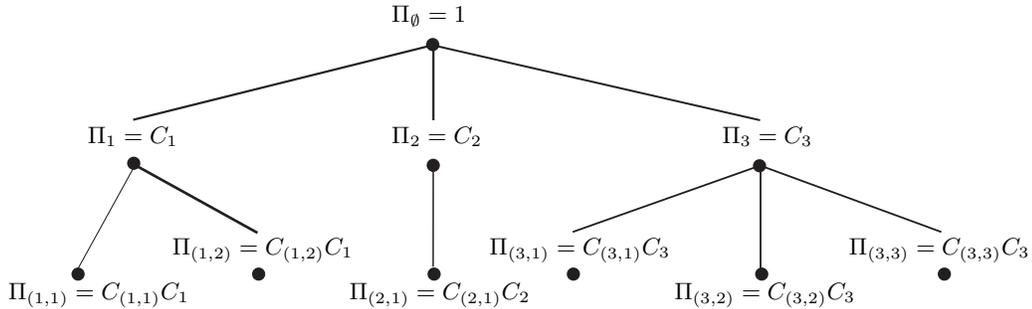}}
\put(145,105){\footnotesize $\Pi_\emptyset = 1$}
\put(30,59){\footnotesize $\Pi_{1} = C_1$}
\put(145,59){\footnotesize $\Pi_{2} = C_2$}
\put(270,59){\footnotesize $\Pi_{3} = C_3$}
\put(0,0){\footnotesize $\Pi_{(1,1)} = C_{(1,1)} C_1$}
\put(62,17){\footnotesize $\Pi_{(1,2)} = C_{(1,2)} C_1$}
\put(129,0){\footnotesize $\Pi_{(2,1)} = C_{(2,1)} C_2$}
\put(182,17){\footnotesize $\Pi_{(3,1)} = C_{(3,1)} C_3$}
\put(252,0){\footnotesize $\Pi_{(3,2)} = C_{(3,2)} C_3$}
\put(318,17){\footnotesize $\Pi_{(3,3)} = C_{(3,3)} C_3$}
\end{picture}
\caption{Weighted branching process}\label{F.Tree}
\end{figure}

For a weighted branching process with generic branching vector $(Q, N, C_1, C_2, \dots)$, define the process $\{ R^{(k)} : k \geq 0\}$ as follows:
\begin{equation} \label{eq:RkDef}
    R^{(k)} = \sum_{j=0}^k \sum_{{\bf i} \in A_j} Q_{\bf i} \Pi_{\bf i}, \quad k \geq 0.
\end{equation}
By focusing on the branching vector belonging to the root node, i.e., $(Q_\emptyset, N_\emptyset, C_1, C_2, \dots)$ we can see that the process
$\{ R^{(k)} \}$ satisfies the distributional equations
\begin{align}
    R^{(0)}&=Q_{\emptyset}\stackrel{\mathcal D}{=}Q\nonumber\\
    R^{(k)} &= \sum_{r=1}^{N_\emptyset} C_r \left( \sum_{j=1}^k \sum_{(r,{\bf i}) \in A_j} Q_{(r,{\bf i})} \Pi_{(r,{\bf i})}/ C_r  \right) + Q_\emptyset \stackrel{\mathcal{D}}{=} \sum_{r=1}^{N} C_r R^{(k-1)}_r + Q, \qquad k \geq 1,\label{eq:dist-equation}
\end{align}
where $R^{(k-1)}_r$ are i.i.d. copies of $R^{(k-1)}$, all independent of $(Q, N, C_1, C_2, \dots)$.  Here and throughout the paper the convention is that $XY/Y \equiv 1$ if $Y = 0$. Moreover, if we define
\begin{equation} \label{eq:EndogenousSol}
R = \sum_{j=0}^\infty \sum_{{\bf i} \in A_j} Q_{\bf i} \Pi_{\bf i},
\end{equation}
we have the following result. We use $x \vee y$ to denote the maximum of $x$ and $y$.

\begin{proposition} \label{P.MomentsR}
Let $\beta \geq 1$ be such that $E[|Q|^\beta] < \infty$ and $E\left[ \left( \sum_{i=1}^N |C_i| \right)^\beta \right] < \infty$. In addition, assume either (i) $(\rho_1 \vee \rho_\beta) < 1$ , or (ii) $\beta = 2$, $\rho_1 = 1$, $\rho_\beta  < 1$ and $E[Q] = 0$. Then, there exist constants $K_\beta > 0$ and $0 < c_\beta < 1$ such that for $R^{(k)}$ and $R$ defined according to \eqref{eq:RkDef} and \eqref{eq:EndogenousSol}, respectively, we have
$$\sup_{k \geq 0} E\left[ |R^{(k)}|^\beta \right] \leq K_\beta < \infty \qquad \text{and} \qquad E\left[ | R^{(k)} - R|^\beta \right] \leq K_\beta c_\beta^{k+1}.$$
\end{proposition}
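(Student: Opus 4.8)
The plan is to first establish the uniform moment bound $\sup_{k\ge 0} E[|R^{(k)}|^\beta] \le K_\beta$ and then, using the same machinery, control the difference $E[|R^{(k)} - R|^\beta]$. Throughout we set $\rho_t = E\big[\sum_{i=1}^N |C_i|^t\big]$; note that $\rho_\beta \le E\big[(\sum_i |C_i|)^\beta\big] < \infty$ by assumption, so all quantities appearing below are finite. For the moment bound, I would work directly from the distributional recursion \eqref{eq:dist-equation}, writing $R^{(k)} \stackrel{\mathcal D}{=} \sum_{r=1}^N C_r R^{(k-1)}_r + Q$ with the $R^{(k-1)}_r$ i.i.d.\ and independent of $(Q,N,C_1,\dots)$. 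Let $m_k = E[|R^{(k)}|^\beta]$. In case (i), I would apply a conditional Minkowski/triangle inequality in $L^\beta$ (conditioning on $(Q,N,C_1,C_2,\dots)$) to get $\big(E[|R^{(k)}|^\beta \mid \mathcal F]\big)^{1/\beta} \le \sum_{r=1}^N |C_r|\, m_{k-1}^{1/\beta} + |Q|$, then take $\beta$-th powers and expectations; using $E[(\sum_r |C_r|)^\beta] = \rho_1'$ (say) together with Hölder or the elementary bound $(a+b)^\beta \le (1+\epsilon)^{\beta-1}a^\beta + (1+\epsilon^{-1})^{\beta-1}b^\beta$, this yields a recursion $m_k \le a\, m_{k-1} + b$ with $a < 1$ whenever $\rho_1, \rho_\beta < 1$ (after choosing $\epsilon$ small and invoking $\rho_1<1$ to absorb the cross terms). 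Iterating gives $\sup_k m_k \le b/(1-a) =: K_\beta$.

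For case (ii), where $\beta = 2$, $\rho_1 = 1$, $\rho_\beta = \rho_2 < 1$, and $E[Q] = 0$, the $L^2$ computation is cleaner: conditioning on $(Q,N,C_1,\dots)$ and using that the $R^{(k-1)}_r$ are i.i.d.\ with common mean $\mu_{k-1} = E[R^{(k-1)}]$ and second moment $m_{k-1}$, one expands $E[(\sum_r C_r R_r^{(k-1)} + Q)^2]$ into diagonal terms ($\rho_2 m_{k-1}$), off-diagonal terms ($E[(\sum_r C_r)^2 - \sum_r C_r^2]\,\mu_{k-1}^2$), and the $Q$ terms. A separate, easier recursion for the mean $\mu_k = E[Q] + E[\sum_r C_r]\,\mu_{k-1} = \rho_1 \mu_{k-1} = \mu_{k-1}$ shows $\mu_k = E[Q] = 0$ for all $k$ (here is where $E[Q]=0$ and $\rho_1 = 1$ enter), so the off-diagonal and cross terms vanish and we are left with $m_k = \rho_2 m_{k-1} + E[Q^2]$, giving $\sup_k m_k \le E[Q^2]/(1-\rho_2)$.

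For the second bound, the key observation is that on the weighted branching process $R - R^{(k)} = \sum_{j=k+1}^\infty \sum_{\mathbf i \in A_j} Q_{\mathbf i}\Pi_{\mathbf i}$, and by conditioning on the first generation this "tail" satisfies a distributional identity of the same type: $R - R^{(k)} \stackrel{\mathcal D}{=} \sum_{r=1}^N C_r (R_r - R^{(k-1)}_r)$, i.e.\ the recursion \eqref{eq:dist-equation} with $Q$ replaced by $0$, where the $(R_r - R^{(k-1)}_r)$ are i.i.d.\ copies of $R - R^{(k-1)}$ independent of the branching vector. Setting $d_k = E[|R - R^{(k)}|^\beta]$, the same two estimates as above — now with the additive $|Q|$/$E[Q^2]$ term absent — give $d_k \le \rho_\beta\, d_{k-1}$ in case (ii) directly, and $d_k \le a\, d_{k-1}$ with $a<1$ in case (i) (the cross terms are controlled exactly as before, again using $\rho_1 \vee \rho_\beta < 1$, and in the homogeneous case one can even take $a = \rho_\beta$ when $\beta \ge 1$ by Jensen, or absorb into $\rho_1$). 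Hence $d_k \le c_\beta^{k}\, d_0 = c_\beta^k E[|R - R^{(0)}|^\beta] = c_\beta^k E[|R - Q|^\beta]$, and since $d_0 \le 2^{\beta-1}(E[|R|^\beta] + E[|Q|^\beta]) \le 2^{\beta-1}(K_\beta + E[|Q|^\beta]) < \infty$ (using $E[|R|^\beta] = \lim_k m_k \le K_\beta$ by monotone/dominated convergence), we may enlarge $K_\beta$ so that $d_k \le K_\beta c_\beta^{k+1}$, as claimed.

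The main obstacle is the bookkeeping in case (i): one must choose the splitting parameter $\epsilon$ in $(a+b)^\beta \le (1+\epsilon)^{\beta-1}a^\beta + \dots$ carefully so that the coefficient of $m_{k-1}$ stays strictly below $1$ — this is where the hypothesis $\rho_1 < 1$ (not merely $\rho_\beta < 1$) is essential, since for $\beta > 1$ the naive bound only produces a coefficient near $\rho_\beta \cdot (\text{something} > 1)$ unless the mass of $\sum_i |C_i|$ that is "close to $1$" is itself small, which $\rho_1 < 1$ guarantees. One must also verify finiteness of all the constants $\rho_t$, $t \in \{1,\beta\}$, which follows from $E[(\sum_i |C_i|)^\beta] < \infty$ together with $\big(\sum_i |C_i|\big)^t \le 1 + \big(\sum_i |C_i|\big)^\beta$ for $1 \le t \le \beta$. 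The passage $E[|R|^\beta] \le K_\beta$ via Fatou/monotone convergence on the partial sums $R^{(k)} \to R$ (which holds a.s.\ by \eqref{eq:geometricTail}-type arguments, or directly from absolute convergence of the defining series under these moment assumptions) is routine.
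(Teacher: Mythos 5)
Your approach is genuinely different from the paper's: you try to derive the uniform bound directly from a one-step recursion $m_k \le a\,m_{k-1}+b$ (and similarly $d_k \le a\,d_{k-1}$) using conditional Minkowski plus the splitting inequality $(x+y)^\beta \le (1+\epsilon)^{\beta-1}x^\beta + (1+\epsilon^{-1})^{\beta-1}y^\beta$, whereas the paper decomposes $R^{(k)} = \sum_{n=0}^k W_n$ with $W_n = \sum_{\mathbf i\in A_n} Q_{\mathbf i}\Pi_{\mathbf i}$, invokes Lemma 4.4 of \cite{Jel_Olv_12b} to get $\|W_n\|_\beta^\beta \le H_\beta(\rho_1\vee\rho_\beta)^n$, and then just uses Minkowski over $n$. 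Your treatment of case (ii), and the observation that on the coupled tree $R - R^{(k)} = \sum_{r=1}^{N_\emptyset} C_r\bigl(R_r - R_r^{(k-1)}\bigr)$ with the summands i.i.d.\ copies of $R-R^{(k-1)}$ independent of the branching vector, are both correct and give a clean alternative for that case.

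However, there is a genuine gap in case (i). The conditional Minkowski step gives
$$\bigl(E[|R^{(k)}|^\beta\mid\mathcal F]\bigr)^{1/\beta} \le \Bigl(\sum_{r=1}^N |C_r|\Bigr) m_{k-1}^{1/\beta} + |Q|,$$
and after applying the splitting inequality and taking expectations the coefficient in front of $m_{k-1}$ is
$$(1+\epsilon)^{\beta-1}\,E\Bigl[\Bigl(\sum_{r=1}^N |C_r|\Bigr)^\beta\Bigr],$$
not anything controlled by $\rho_1\vee\rho_\beta$. The quantity $E\bigl[(\sum_r|C_r|)^\beta\bigr]$ is only assumed finite, and it can be arbitrarily large while $\rho_1\vee\rho_\beta<1$. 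For instance, take $C_i\equiv c$, and $N=0$ with probability $1-p$ and $N=M$ with probability $p$; then $\rho_1 = pMc$, $\rho_\beta = pMc^\beta$, but $E[(\sum_r|C_r|)^\beta] = p(Mc)^\beta = M^{\beta-1}\rho_\beta$, which blows up as $M\to\infty$ even with $\rho_1,\rho_\beta$ held below $1$. So no choice of $\epsilon$ makes the coefficient less than $1$, and the remark that ``$\rho_1<1$ guarantees the mass of $\sum_i|C_i|$ close to $1$ is small'' does not repair this: first-moment control of $\sum_i|C_i|$ says nothing about its $\beta$-th moment. The same obstruction hits your contraction step for $d_k$, which needs $E[(\sum_r|C_r|)^\beta]<1$ rather than $\rho_1\vee\rho_\beta<1$. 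This is precisely why the paper works at the level of the single generations $W_n$: the cited lemma proves the geometric bound $\|W_n\|_\beta^\beta \le H_\beta(\rho_1\vee\rho_\beta)^n$ by a more refined argument (centering $W_{n-1}$ and treating the centered and mean parts separately, using a von Bahr--Esseen/Burkholder-type inequality for the centered part to bring in $\rho_\beta$ and the first-moment recursion to bring in $\rho_1$), and the crude bound via $E[(\sum_r|C_r|)^\beta]$ never appears. To salvage your direct-recursion route you would have to implement that same centering, which is nontrivial because $\|R^{(k-1)} - E[R^{(k-1)}]\|_\beta$ does not itself decay; the clean decomposition of the paper is what makes the geometric rate visible.
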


\begin{proof}
For the case $\rho_1 \vee \rho_\beta < 1$, Lemma 4.4 in \cite{Jel_Olv_12b} gives that for $W_n = \sum_{{\bf i} \in A_n} Q_{\bf i} \Pi_{\bf i}$ and some finite constant $H_\beta$ we have
$$E\left[ \left| W_n \right|^\beta \right] \leq H_\beta (\rho_1 \vee \rho_\beta)^n.$$
Let $c_\beta = \rho_1 \vee \rho_\beta$. Minkowski's inequality then gives
$$\left|\left| R^{(k)} \right|\right|_\beta \leq \sum_{n=0}^k \left|\left| W_n \right|\right|_\beta \leq \sum_{n=0}^\infty \left( H_\beta c_\beta^n \right)^{1/\beta} = \left( \frac{H_\beta}{1- c_\beta^{1/\beta}} \right)^{1/\beta} \triangleq \left( K_\beta \right)^{1/\beta} < \infty.$$
Similarly,
$$\left|\left| R^{(k)} - R \right|\right|_\beta \leq \sum_{n=k+1}^\infty \left|\left| W_n \right|\right|_\beta \leq \sum_{n=k+1}^\infty \left( H_\beta c_\beta^n \right)^{1/\beta} = c_\beta^{(k+1)/\beta} \left( \frac{H_\beta}{1- (\rho_1 \vee \rho_\beta)^{1/\beta}} \right)^{1/\beta} = \left( K_\beta c_\beta^{k+1} \right)^{1/\beta}.$$

For the case $\beta = 2$, $\rho_1 = 1$, $\rho_\beta < 1$ and $E[Q] = 0$ we have that
\begin{align*}
E\left[ W_n^2 \right] &= E\left[ \left( \sum_{r=1}^{N_\emptyset} C_r W_{n-1,r} \right)^2 \right]  = E\left[ \sum_{r=1}^{N_\emptyset} C_r^2 (W_{n-1,r})^2 + \sum_{1\leq r \neq s \leq N_\emptyset} C_r C_s W_{n-1,r} W_{n-1,s} \right] ,
\end{align*}
where $W_{n-1,r} = \sum_{(r,{\bf i}) \in A_n} Q_{(r,{\bf i})} \Pi_{(r,{\bf i})} / C_r$, and the $\{W_{n-1,r}\}_{r \geq 1}$ are i.i.d. copies of $W_{n-1}$, independent of $(N_\emptyset, C_1, C_2, \dots)$. Since $E[ W_{n} ] = 0$ for all $n \geq 0$, it follows that
$$E[W_n^2] = \rho_2 E[W_{n-1}^2] = \rho_2^n E[W_0^2] = \var(Q) \rho_2^n.$$
The two results now follow from the same arguments used above with $H_2 = \var(Q)$ and $c_2 = \rho_2$.
\end{proof}

It follows from the previous result that under the conditions of Proposition  \ref{P.MomentsR}, $R^{(k)}$ converges to $R$ both almost surely and in $L^\beta$-norm. Similarly, if we ignore the $Q$ in the generic branching vector, assume that $C_i \geq 0$ for all $i$, and define the process
$$W^{(k)} = \sum_{{\bf i} \in A_k} \Pi_{\bf i} = \sum_{r=1}^{N_\emptyset} C_r \left( \sum_{(r,{\bf i}) \in A_k} \Pi_{(r,{\bf i})}/ C_r \right) \stackrel{\mathcal{D}}{=} \sum_{r=1}^N C_r W_r^{(k-1)},$$
where the $\{W_r^{(k-1)}\}_{r \geq 1}$ are i.i.d. copies of $W^{(k-1)}$ independent of $(N, C_1, C_2, \dots)$, then it can be shown that $\{ W^{(k)}/\rho_1^k : k \geq 0\}$ defines a nonnegative martingale which converges almost surely to the endogenous solution of the stochastic fixed-point equation
$$W \stackrel{\mathcal D}{=}  \sum_{i=1}^N \frac{C_i}{\rho_1} \, W_i,$$
where the $\{W_i\}_{i \geq 1}$ are i.i.d. copies of $W$, independent of $(N, C_1, C_2, \dots)$. We refer to this equation as the homogeneous case.

As mentioned in the introduction, our objective is to generate a sample of $R^{(k)}$ for values of $k$ sufficiently large to suitably approximate $R$. Our proposed algorithm can also be used to simulate $W^{(k)}$, but due to space limitations we will omit the details.

\section{The algorithm}\label{sec:algorithm}

Note that based on \eqref{eq:RkDef}, one way to simulate $R^{(k)}$ would be to simulate a weighted branching process starting from the root and up to the $k$ generation and then add all the weights $Q_{\bf i} \Pi_{\bf i}$ for ${\bf i} \in \bigcup_{j=0}^k A_j$. Alternatively, we could generate a large enough pool of i.i.d. copies of $Q$ which would represent the $Q_{\bf i}$ for ${\bf i} \in A_k$, and use them to generate a pool of i.i.d. observations of $R^{(1)}$ by setting
$$R^{(1)}_i = \sum_{r=1}^{N_i} C_{(i,r)} R^{(0)}_r + Q_i,$$
where $\{ (Q_i, N_i, C_{(i,1)}, C_{(i,2)}, \dots)\}_{i \geq 1}$ are i.i.d. copies of the generic branching vector, independent of everything else, and the $R^{(0)}_r$ are the $Q$'s generated in the previous step. We can continue this process until we get to the root node. On average, we would need $(E[N])^k$ i.i.d. copies of $Q$ for the first pool of observations, $(E[N])^{k-1}$ copies of the generic branching vector for the second pool, and in general, $(E[N])^{k-j}$ for the $j$th step. This approach is equivalent to simulating the weighted branching process starting from the $k$th generation and going up to the root, and is the result of iterating \eqref{eq:Iterations}.

Our proposed algorithm is based on this ``leaves to root" approach, but to avoid the need for a geometric number of ``leaves", we will resample from the initial pool to obtain a pool of the same size of observations of $R^{(1)}$. In general, for the $j$th generation we will sample from the pool obtained in the previous step of (approximate) observations of $R^{(j-1)}$ to obtain conditionally independent (approximate) copies of $R^{(j)}$. In other words, to obtain a pool of approximate copies of $R^{(j)}$ we bootstrap from the pool previously obtained of approximate copies of $R^{(j-1)}$. The approximation lies in the fact that we are not sampling from $R^{(j-1)}$ itself, but from a finite sample of conditionally independent observations that are only approximately distributed as $R^{(j-1)}$. The algorithm is described below.

Let $(Q,N,C_1,C_2,\dots)$ denote the generic branching vector defining the weighted branching process. Let $k$ be the depth of the recursion that we want to simulate, i.e., the algorithm will produce a sample of random variables approximately distributed as $R^{(k)}$. Choose $m\in \mathbb N_+$ to be the bootstrap sample size.  For each $0 \leq j \leq k$, the algorithm outputs $\mathcal P^{(j,m)} \triangleq \left(\hat R^{(j,m)}_1, \hat R^{(j,m)}_2,\dots,\hat R^{(j,m)}_m\right)$, which we refer to as the sample pool at level $j$.

\begin{enumerate}
 \item[1.] \emph{Initialize}: Set $j=0$. Simulate a sequence $\{Q_i\}_{i = 1}^m$ of i.i.d. copies of $Q$ and let $\hat R^{(0,m)}_i=Q_i$ for $i=1,\dots,m$. Output $\mathcal P^{(0,m)}=\left(\hat R^{(0,m)}_1, \hat R^{(0,m)}_2,\dots,\hat R^{(0,m)}_m\right)$ and update $j = 1$.

\item[2.] While $j \leq k$:

\begin{enumerate}
\item[i)] Simulate a sequence $\{ (Q_i, N_i, C_{(i,1)}, C_{(i,2)}, \dots) \}_{i =1}^m$ of i.i.d. copies of the generic branching vector, independent of everything else.

\item[ii)] Let
        \begin{equation}
            \hat R^{(j,m)}_i = \sum_{r=1}^{N_i} C_{(i,r)} \hat R^{(j-1,m)}_{(i,r)} + Q_i, \qquad i=1,\dots,m,\label{eq:bootstrap-recur}
        \end{equation}
        where the $\hat R^{(j-1,m)}_{(i,r)}$ are sampled uniformly with replacement from the pool $\mathcal P^{(j-1,m)}$.

\item[iii)] Output $\mathcal P^{(j,m)}=\left(\hat R^{(j,m)}_1, \hat R^{(j,m)}_2,\dots,\hat R^{(j,m)}_m\right)$ and update $j = j+1$.

\end{enumerate}

\end{enumerate}

    Bootstrapping refers broadly to any method that relies on random sampling with replacement \cite{bootstrap1993efron}. For example, bootstrapping can be used to estimate the variance of an estimator, by constructing samples of the estimator from a number of resamples of the original dataset with replacement.
    With the same idea, our algorithm draws samples uniformly with replacement from the previous bootstrap sample pool.
    Therefore, the $\hat R^{(j-1,m)}_{(i,r)}$ on the right-hand side of \eqref{eq:bootstrap-recur} are only conditionally independent given $\mathcal P^{(j-1,m)}$. Hence, the samples in $\mathcal P^{(j,m)}$ are identically distributed but not independent for $j\ge 1$.

    As we mentioned earlier, the distribution of the $\{ \hat R_i^{(j,m)}\}$ in $\mathcal{P}^{(j,m)}$ are only approximately distributed as $R^{(j)}$, with the exception of the $\{\hat R_i^{(0,m)}\}$ which are exact. The first thing that we need to prove is that the distribution of the observations in $\mathcal{P}^{(j,m)}$ does indeed converge to that of $R^{(j)}$. Intuitively, this should be the case since the empirical distribution of the $\{\hat R_i^{(0,m)}\}$ is the empirical distribution of $m$ i.i.d. observations of $R^{(0)}$, and therefore should be close to the true distribution of $R^{(0)}$ for suitably large $m$. Similarly, since the $\{ \hat R_i^{(1,m)}\}$ are constructed by sampling from the empirical distribution of $\mathcal{P}^{(0,m)}$, which is close to the true distribution of $R^{(0)}$, then their empirical distribution should be close to the empirical distribution of $R^{(1)}$, which in turn should be close to the true distribution of $R^{(1)}$. Inductively, provided the approximation is good in step $j-1$, we can expect the empirical distribution of $\mathcal{P}^{(j,m)}$ to be close to the true distribution of $R^{(j)}$. In the following section we make the mode of the convergence precise by considering the Kantorovich-Rubinstein distance between the empirical distribution of $\mathcal{P}^{(j,m)}$ and the true distribution of $R^{(j)}$.

    The second technical aspect of our proposed algorithm is the lack of independence among the observations in $\mathcal{P}^{(k,m)}$, since a natural estimator for quantities of the form $E[ h(R^{(k)}) ]$ would be to use
    \begin{equation} \label{eq:hEstimator}
\frac{1}{m} \sum_{i=1}^m h (\hat R_i^{(k,m)}).
\end{equation}
    Hence, we also provide a result establishing the consistency of estimators of the form in \eqref{eq:hEstimator} for a suitable family of functions $h$.

    We conclude this section by pointing out that the complexity of the algorithm described above is of order $k m$, while the naive Monte Carlo approach has order $(E[N])^k m$. This is a huge gain in efficiency.

%

\section{Convergence and consistency}
\label{sec:convergence}

In order to show that our proposed algorithm does indeed produce observations that are approximately distributed as $R^{(k)}$ for any fixed $k$, we will show that the empirical distribution function of the observations in $\mathcal{P}^{(k,m)}$ , i.e.,
\[
    \hat F_{k,m}(x) = \frac{1}{m} \sum_{i=1}^{m} 1( \hat R_{i}^{(k,m)}\le x )
\]
converges as $m \to \infty$ to the true distribution function of $R^{(k)}$, which we will denote by $F_k$. We will show this by using the Kantorovich-Rubinstein distance, which is a metric on the space of probability measures. In particular, convergence in this sense is equivalent to weak convergence plus convergence of the first absolute moments.


\begin{definition} \label{d.wasserstein}
    let $M(\mu, \nu)$ denote the set of joint probability measures on $\mathbb R \times \mathbb R$  with marginals $\mu$ and $\nu$. then, the Kantorovich-Rubinstein distance between $\mu$ and $\nu$ is given by
$$d_1(\mu, \nu) = \inf_{\pi \in M(\mu, \nu)} \int_{\mathbb R\times \mathbb R} | x - y | \, d \pi(x, y).$$
\end{definition}

We point out that $d_1$ is only strictly speaking a distance when both $\mu$ and $\nu$ have finite first absolute moments. Moreover, it is well known that
  \begin{equation} \label{eq:Explicit}
  d_1(\mu, \nu) = \int_{0}^1 | F^{-1}(u) - G^{-1}(u) | du = \int_{-\infty}^\infty | F(x) - G(x) | dx,
  \end{equation}
where $F$ and $G$ are the cumulative distribution functions of $\mu$ and $\nu$, respectively, and $f^{-1}(t) = \inf\{ x \in \mathbb{R}: f(x) \geq t\}$ denotes the pseudo-inverse of $f$. It follows that the optimal coupling of two real random variables $X$ and $Y$ is given by $(X, Y) = (F^{-1}(U), G^{-1}(U))$, where $U$ is uniformly distributed in $[0, 1]$.

\begin{remark}
The Kantorovich-Rubinstein distance is also known as the Wasserstein metric of order 1. In general, both the Kantorovich-Rubinstein distance and the more general Wasserstein metric of order $p$ can be defined in any metric space; we restrict our definition in this paper to the real line since that is all we need. We refer the interested reader to \cite{Villani_2009} for more details.
\end{remark}

With some abuse of notation, for two distribution functions $F$ and $G$ we use $d_1(F,G)$ to denote the Kantorovich-Rubinstein distance between their corresponding probability measures.

The following proposition shows that for i.i.d. samples, the expected value of the Kantorovich-Rubinstein distance between the empirical distribution function and the true distribution converges to zero.

\begin{proposition}\label{P.empirical_d1}
  Let $\{ X_i\}_{i \geq 1}$ be a sequence of i.i.d. random variables with common distribution $F$. Let $F_n$ denote the empirical distribution function of a sample of size $n$. Then, provided there exists $\alpha\in (1,2)$ such that $E\left[ |X_1|^{\alpha} \right]< \infty$, we have that
$$E\left[ d_1(F_n,F) \right]\le n^{-1 + 1/\alpha}\left( \frac{2\alpha}{\alpha-1}+\frac{2}{2-\alpha} \right) E[|X_1|^\alpha].$$
\end{proposition}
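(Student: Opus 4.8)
The plan is to use the explicit one-dimensional formula for the Kantorovich–Rubinstein distance, namely the second expression in \eqref{eq:Explicit}, which gives
$$d_1(F_n, F) = \int_{-\infty}^\infty |F_n(x) - F(x)|\, dx.$$
Taking expectations and applying Fubini's theorem (justified by nonnegativity of the integrand), the task reduces to bounding $\int_{-\infty}^\infty E[\,|F_n(x) - F(x)|\,]\, dx$. For each fixed $x$, the quantity $n F_n(x) = \sum_{i=1}^n 1(X_i \le x)$ is a sum of i.i.d. Bernoulli random variables with mean $n F(x)$, so $E[\,|F_n(x)-F(x)|\,] \le \sqrt{\var(F_n(x))} = \sqrt{F(x)(1-F(x))/n} \le \sqrt{\min\{F(x), 1-F(x)\}/n}$ by the Cauchy–Schwarz (or Jensen) inequality.

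**Next I would** split the integral at $0$ and handle the two half-lines symmetrically. On $(0,\infty)$ I use the bound $E[\,|F_n(x)-F(x)|\,] \le \sqrt{(1-F(x))/n} = \sqrt{P(X_1 > x)/n}$, and on $(-\infty, 0)$ I use $E[\,|F_n(x)-F(x)|\,] \le \sqrt{F(x)/n} = \sqrt{P(X_1 \le x)/n}$. So it remains to control $\int_0^\infty \sqrt{P(X_1 > x)}\, dx$ and $\int_{-\infty}^0 \sqrt{P(X_1 \le x)}\, dx$. **The key step** is to convert the moment hypothesis $E[|X_1|^\alpha] < \infty$ into a tail bound via Markov's inequality: $P(X_1 > x) \le P(|X_1| > x) \le E[|X_1|^\alpha] x^{-\alpha}$ for $x > 0$, and similarly $P(X_1 \le -|x|) \le E[|X_1|^\alpha]|x|^{-\alpha}$. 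However, since $\sqrt{x^{-\alpha}} = x^{-\alpha/2}$ is not integrable near $0$ when $\alpha < 2$ (it would require $\alpha/2 < 1$ near $0$, which is fine) but fails integrability at $\infty$ when $\alpha < 2$ (since $\alpha/2 < 1$), I must be more careful: I would split each half-line integral at the point $x = 1$ (or at a cutoff to be optimized), using the trivial bound $\sqrt{P(X_1>x)} \le 1$ on the bounded piece and the Markov tail bound $\sqrt{E[|X_1|^\alpha]}\, x^{-\alpha/2}$ on the unbounded piece. Actually, to get the clean stated constant, I expect one instead truncates directly inside $E[|F_n(x)-F(x)|]$: on each region, bound the expectation by $\min\{1, \sqrt{E[|X_1|^\alpha]}\, n^{-1/2} |x|^{-\alpha/2}\}$ (combining the two estimates $E[|F_n-F|]\le 1$ and $E[|F_n-F|] \le \sqrt{P/n} \le \sqrt{E[|X_1|^\alpha]/n}\,|x|^{-\alpha/2}$), and integrate this minimum explicitly. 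The crossover point is $|x| = (E[|X_1|^\alpha]/n)^{1/\alpha}$, and one computes $\int_0^T 1\, dx + \int_T^\infty \sqrt{E[|X_1|^\alpha]/n}\, x^{-\alpha/2}\, dx$ with $T = (E[|X_1|^\alpha]/n)^{1/\alpha}$; this yields $T + \frac{2}{\alpha-2}\cdot(\text{junk})$ — wait, $\alpha<2$ makes $\int_T^\infty x^{-\alpha/2}$ diverge, so the truncation must go the other way.

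**Reconsidering, the correct split** is: use $E[|F_n(x)-F(x)|] \le \sqrt{E[|X_1|^\alpha]/n}\, |x|^{-\alpha/2}$ for $|x|$ \emph{small} (this is weak there, so instead use the trivial bound $\le 1$... no). The resolution is that $\sqrt{P(X_1>x)}$, bounded by $1$ for small $x$ and by $\sqrt{E[|X_1|^\alpha]}\,x^{-\alpha/2}$ for large $x$, is integrable on $(0,\infty)$ precisely because $\alpha > 1 > \alpha/2$ fails — hmm, $\alpha/2 > 1 \iff \alpha > 2$. So for $\alpha \in (1,2)$ the function $x^{-\alpha/2}$ is \emph{not} integrable at infinity, meaning $\int_0^\infty\sqrt{P(X_1>x)}dx$ need not even be finite, so one genuinely needs the factor $n^{-1/2}$ partnered with a truncation that exploits $E[|F_n(x)-F(x)|]\le 1$ for $|x|$ \emph{large} and the variance bound for $|x|$ \emph{small}. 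Concretely: for $|x| \le T$ use the variance bound $\sqrt{E[|X_1|^\alpha]/n}\,|x|^{-\alpha/2}$ (integrable at $0$ since $\alpha/2<1$), contributing $\approx \frac{2}{2-\alpha}\sqrt{E[|X_1|^\alpha]/n}\,T^{1-\alpha/2}$ per half-line; for $|x| > T$ use $E[|F_n(x)-F(x)|] \le E[1(X_1 > x)] + E[1(X_1\le x, \text{some }X_i > x)] \le 2P(|X_1|>x) \le 2E[|X_1|^\alpha]|x|^{-\alpha}$... hmm that overcounts. Cleaner: $|F_n(x)-F(x)| \le F_n^c(x) + F^c(x)$ where the complementary cdfs, and $E[F_n^c(x)] = F^c(x) \le E[|X_1|^\alpha]|x|^{-\alpha}$, giving $E[|F_n(x)-F(x)|]\le 2E[|X_1|^\alpha]|x|^{-\alpha}$, integrable at $\infty$ since $\alpha>1$, contributing $\frac{2}{\alpha-1}E[|X_1|^\alpha]T^{1-\alpha}$ per half-line. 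Optimizing over $T$ by balancing — or simply choosing $T = n^{1/\alpha}$ to kill the $n$-dependence — gives each half-line contribution of order $n^{-1/2}\cdot n^{(1-\alpha/2)/\alpha}\cdot\sqrt{E[|X_1|^\alpha]} = n^{-1+1/\alpha}\sqrt{E[|X_1|^\alpha]}$ from the first piece and $n^{(1-\alpha)/\alpha}E[|X_1|^\alpha] = n^{-1+1/\alpha}E[|X_1|^\alpha]$ from the second, and summing the two half-lines with the explicit constants $\frac{2}{2-\alpha}$ and $\frac{2\alpha}{\alpha-1}$ (the extra $\alpha$ and the doubling absorbing $\sqrt{E[|X_1|^\alpha]} \le \max\{1,E[|X_1|^\alpha]\}$ under a WLOG normalization, or more honestly via $\sqrt{a}\le a+1$-type bookkeeping) produces exactly the claimed inequality.

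**The main obstacle** is the bookkeeping around the non-integrability of the naive variance bound for $\alpha < 2$: one must split the real line at a well-chosen truncation level $T = n^{1/\alpha}$, use the variance/Cauchy–Schwarz bound near the origin and the raw Markov tail bound away from it, and then combine the resulting powers of $n$ and of $E[|X_1|^\alpha]$ so that they collapse into the single prefactor $n^{-1+1/\alpha}$ and the single moment $E[|X_1|^\alpha]$ (rather than a square root of it) — this last simplification is what forces the particular choice $T = n^{1/\alpha}$, since that choice makes $\sqrt{E[|X_1|^\alpha]/n}\cdot T^{1-\alpha/2} = (E[|X_1|^\alpha])^{1/2 + (1-\alpha/2)/\alpha}\cdot n^{\ldots}$ — and one checks the exponent of $E[|X_1|^\alpha]$ there is $1/\alpha + 1/2 - 1/2 = $ hmm, this needs $E[|X_1|^\alpha] \ge 1$ or a trivial-case dichotomy to match the stated form. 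I would handle that by treating $E[|X_1|^\alpha] \le 1$ and $> 1$ separately, or simply by noting the stated bound is only interesting when the right-hand side is $< 1$ anyway. Everything else is elementary: Fubini, Cauchy–Schwarz on a Bernoulli sum, Markov's inequality, and two explicit integrals $\int_0^T x^{-\alpha/2}dx$ and $\int_T^\infty x^{-\alpha}dx$.
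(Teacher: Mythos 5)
The paper does not actually prove Proposition~\ref{P.empirical_d1}; it cites Theorem~2.2 of del Barrio, Gin\'e, and Matr\'an \cite{del1999central} and says to ``follow the same arguments, setting $M=1$.'' Your proposed route---use the one-dimensional formula $d_1(F_n,F)=\int_{\mathbb R}|F_n(x)-F(x)|\,dx$ from \eqref{eq:Explicit}, apply Fubini, bound $E\left[|F_n(x)-F(x)|\right]$ by $\sqrt{F(x)(1-F(x))/n}$ (variance of a binomial proportion) near the origin and by $2\min\{F(x),1-F(x)\}\le 2E[|X_1|^\alpha]\,|x|^{-\alpha}$ far from it, split at a truncation level $T$, and integrate each piece explicitly---is exactly the standard argument, and is what the cited theorem does. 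So the strategy is sound and essentially the same as the paper's source.

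However, the place where you get stuck is worth examining rather than papering over. With $T=n^{1/\alpha}$, your inner integral produces a factor $(E[|X_1|^\alpha])^{1/2}$ while the outer produces $E[|X_1|^\alpha]$, and neither assuming $E[|X_1|^\alpha]\ge 1$ nor a ``trivial-case dichotomy'' repairs this, because the inequality exactly as stated in the proposition is not scale-consistent: under $X_1\mapsto cX_1$, the left side scales like $c$ while the right side scales like $c^\alpha$, and since $\alpha>1$ the inequality must fail for distributions with sufficiently small $E[|X_1|^\alpha]$ (take, e.g., $X_1=c\cdot\mathrm{Bernoulli}(1/2)$ with $c$ small and $n$ fixed). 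The dimensionally correct truncation is $T=(nE[|X_1|^\alpha])^{1/\alpha}$, which makes both pieces produce $(E[|X_1|^\alpha])^{1/\alpha}\,n^{-1+1/\alpha}$ and yields a bound of the form $C_\alpha\, n^{-1+1/\alpha}\,(E[|X_1|^\alpha])^{1/\alpha}$. This is the scale-correct version of the statement, and it is all that is needed for the way Proposition~\ref{P.empirical_d1} is used in the proof of Theorem~\ref{T.d1-l1-convergence} (only the rate in $m$ and the finiteness of the prefactor matter there). In short: the ``gap'' you identified is not a flaw in your approach but an imprecision in the paper's paraphrase of the cited result, and you should resolve it by changing the moment exponent rather than by adding an artificial normalization. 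One last comment: the write-up has two or three false starts, including one where you momentarily argue the inner/outer split must go the other way, before arriving at the right decomposition; a clean proof should present only the final scheme.
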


Proposition~\ref{P.empirical_d1} can be proved following the same arguments used in the proof of Theorem 2.2 in \cite{del1999central} by setting $M=1$, and thus we omit it.

We now give the main theorem of the paper, which establishes the convergence of the expected Kantorovich-Rubinstein distance between $\hat F_{k,m}$ and $F_k$. Its proof is based on induction and the explicit representation \eqref{eq:Explicit}. Recall that $\rho_\beta = E\left[ \sum_{i=1}^N |C_i|^\beta \right]$.

\begin{theorem}\label{T.d1-l1-convergence}
    Suppose that the conditions of Proposition \ref{P.MomentsR} are satisfied for some $\beta > 1$. Then, for any $\alpha \in (1, 2)$ with $\alpha \leq \beta$, there exists a constant $K_\alpha < \infty$ such that
\begin{equation} \label{eq:InductionHyp}
E\left[ d_1(\hat F_{k,m}, F_{k}) \right] \leq K_\alpha m^{-1+1/\alpha} \sum_{i=0}^k \rho_1^i.
\end{equation}
\end{theorem}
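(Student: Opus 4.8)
The plan is to prove \eqref{eq:InductionHyp} by induction on $k$, exploiting the recursive structure of both the true process $R^{(k)}$ in \eqref{eq:dist-equation} and the bootstrap pool $\mathcal{P}^{(k,m)}$ in \eqref{eq:bootstrap-recur}, and controlling the Kantorovich-Rubinstein distance via the explicit formula \eqref{eq:Explicit} together with a coupling argument.

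First I would set up the base case $k=0$: here $\hat F_{0,m}$ is the empirical distribution of $m$ i.i.d. copies of $R^{(0)} = Q$, so Proposition~\ref{P.empirical_d1} applies directly (using $E[|Q|^\alpha] < \infty$, which follows from $E[|Q|^\beta] < \infty$ and $\alpha \le \beta$), giving $E[d_1(\hat F_{0,m}, F_0)] \le C m^{-1+1/\alpha}$ for an appropriate constant. For the inductive step, I would introduce the empirical distribution $F^*_{k,m}$ of the pool $\mathcal{P}^{(k-1,m)}$ and construct an intermediate family of random variables $\tilde R^{(k,m)}_i = \sum_{r=1}^{N_i} C_{(i,r)} \tilde R^{(k-1,m)}_{(i,r)} + Q_i$ where the $\tilde R^{(k-1,m)}_{(i,r)}$ are sampled (via the optimal coupling of $F^*_{k-1,m}$ and $F_{k-1}$) so as to be simultaneously the bootstrap draws from $\mathcal{P}^{(k-1,m)}$ and genuine i.i.d. copies of $R^{(k-1)}$. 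The triangle inequality then splits $d_1(\hat F_{k,m}, F_k)$ into (a) $d_1(\hat F_{k,m}, \tilde F_{k,m})$, the distance coming from replacing true copies of $R^{(k-1)}$ by bootstrap draws, and (b) $d_1(\tilde F_{k,m}, F_k)$, the distance between the empirical distribution of $m$ i.i.d. copies of $R^{(k)}$ and its true law.

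For term (b), since the $\tilde R^{(k,m)}_i$ are i.i.d. with law $F_k$ and $E[|R^{(k)}|^\alpha] \le E[|R^{(k)}|^\beta]^{\alpha/\beta} \le K_\beta^{\alpha/\beta}$ is bounded uniformly in $k$ by Proposition~\ref{P.MomentsR}, Proposition~\ref{P.empirical_d1} gives a bound of the form $C' m^{-1+1/\alpha}$ with $C'$ not depending on $k$. For term (a), conditionally on $\mathcal{P}^{(k-1,m)}$ the pair $(\hat R^{(k,m)}_i, \tilde R^{(k,m)}_i)$ differs only in the summands $\hat R^{(k-1,m)}_{(i,r)}$ versus $\tilde R^{(k-1,m)}_{(i,r)}$, which under the optimal coupling satisfy $E\big[|\hat R^{(k-1,m)}_{(i,r)} - \tilde R^{(k-1,m)}_{(i,r)}|\,\big|\,\mathcal{P}^{(k-1,m)}\big] = d_1(F^*_{k-1,m}, F_{k-1})$; using $d_1(\hat F_{k,m},\tilde F_{k,m}) \le \frac{1}{m}\sum_i |\hat R^{(k,m)}_i - \tilde R^{(k,m)}_i|$, then taking conditional expectation, pulling $|C_{(i,r)}|$ through independence, and noting $E\big[\sum_{r=1}^{N_i}|C_{(i,r)}|\big] = \rho_1$, one gets $E[d_1(\hat F_{k,m},\tilde F_{k,m})] \le \rho_1\, E[d_1(F^*_{k-1,m}, F_{k-1})]$. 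Finally $F^*_{k-1,m}$ is exactly $\hat F_{k-1,m}$, so this is $\rho_1\, E[d_1(\hat F_{k-1,m}, F_{k-1})]$, and the induction hypothesis closes the recursion: $E[d_1(\hat F_{k,m},F_k)] \le (C'+\text{base const})\,m^{-1+1/\alpha} + \rho_1 K_\alpha m^{-1+1/\alpha}\sum_{i=0}^{k-1}\rho_1^i = K_\alpha m^{-1+1/\alpha}\sum_{i=0}^{k}\rho_1^i$ once $K_\alpha$ is chosen large enough.

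**The main obstacle** I anticipate is making the coupling in term (a) fully rigorous: one must simultaneously realize the $\hat R^{(k-1,m)}_{(i,r)}$ as uniform-with-replacement draws from the realized pool and as draws from $F_{k-1}$ coupled optimally to the pool's empirical law, and verify that under this joint construction the $\tilde R^{(k,m)}_i$ are genuinely i.i.d.\ with law $F_k$ (this uses that the branching vectors $(Q_i,N_i,C_{(i,\cdot)})$ are fresh i.i.d.\ and independent of $\mathcal{P}^{(k-1,m)}$, and that conditionally i.i.d.\ draws each with law $F_{k-1}$, fed through independent branching vectors, yield i.i.d.\ copies of $R^{(k-1)}$ mixed to give $R^{(k)}$). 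Care is also needed because within a single $i$ the draws $\hat R^{(k-1,m)}_{(i,r)}$, $r=1,\dots,N_i$, are only conditionally i.i.d.\ given the pool, but since we only need the $L^1$ bound on $|\hat R^{(k,m)}_i - \tilde R^{(k,m)}_i|$ — a sum of per-$r$ discrepancies — linearity of expectation handles this without needing independence across $r$. A secondary technical point is confirming that $\alpha \le \beta$ and the moment bounds from Proposition~\ref{P.MomentsR} indeed furnish all the finite-$\alpha$-moment hypotheses required by Proposition~\ref{P.empirical_d1} at every level uniformly in $k$.
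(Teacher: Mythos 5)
Your proposal is correct and takes essentially the same approach as the paper: induction on $k$, a triangle-inequality split against an intermediate empirical distribution built from genuine i.i.d.\ copies of $R^{(k)}$ fed through the same fresh branching vectors, with the bootstrap-versus-true term bounded by $\rho_1 E\left[ d_1(\hat F_{k-1,m}, F_{k-1}) \right]$ via the optimal coupling and the remaining empirical-versus-true term bounded by Proposition~\ref{P.empirical_d1} uniformly in $k$ thanks to Proposition~\ref{P.MomentsR}. The paper realizes exactly the coupling you were concerned about by setting $\hat R_i^{(j,m)}=\sum_{r=1}^{N_i} C_{(i,r)}\hat F_{j-1,m}^{-1}(U_r^i)+Q_i$ and $R_i^{(j)}=\sum_{r=1}^{N_i} C_{(i,r)}F_{j-1}^{-1}(U_r^i)+Q_i$ with shared i.i.d.\ uniform variables $U_r^i$ independent of everything else, which simultaneously produces the bootstrap draws and genuinely i.i.d.\ copies of $R^{(j-1)}$.
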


\begin{proof}
By Proposition \ref{P.MomentsR} there exists a constant $H_\alpha$ such that
$$H_\alpha = \sup_{k \geq 0} E\left[ |R^{(k)}|^\alpha \right] \leq \sup_{k \geq 0} \left( E\left[ |R^{(k)}|^\beta \right] \right)^{\alpha/\beta} < \infty.$$
Set $K_\alpha = H_\alpha \left( \frac{2\alpha}{\alpha-1} + \frac{2}{2-\alpha} \right)$. We will give a proof by induction.

For $j = 0$, we have that
$$\hat F_{0,m}(x) = \frac{1}{m} \sum_{i=1}^m 1(Q_i \leq x),$$
where $\{Q_i\}_{i \geq 1}$ is a sequence of i.i.d. copies of $Q$. It follows that $\hat F_{0,m}$ is the empirical distribution function of $R^{(0)}$, and by Proposition \ref{P.empirical_d1} we have that
$$
E\left[ d_1(\hat F_{0,m}, F_0) \right] \leq K_\alpha m^{-1+1/\alpha}.
$$

Now suppose that \eqref{eq:InductionHyp} holds for $j-1$. Let $\{U_{r}^i \}_{i,r \geq 1}$ be a sequence of i.i.d. Uniform$(0,1)$ random variables, independent of everything else. Let $\{(Q_i, N_i, C_{(i,1)}, C_{(i,2)}, \dots)\}_
{i \geq 1}$ be a sequence of i.i.d. copies of the generic branching vector, also independent of everything else. Recall that $F_{j-1}$ is the distribution function of $R^{(j-1)}$ and define the random variables
    \begin{align*}
        \hat R_{i}^{(j,m)} = \sum_{r=1}^{N_i} C_{(i,r)} \hat F_{j-1,m}^{-1}(U_r^i)+Q_i \qquad \text{and} \qquad R_{i}^{(j)} = \sum_{r=1}^{N_i} C_{(i,r)} F_{j-1}^{-1}(U_r^i)+Q_i
    \end{align*}
for each $i  = 1, 2, \dots, m$. Now use these random variables to define
$$\hat F_{j,m}(x) = \frac{1}{m} \sum_{i=1}^m 1(\hat R_i^{(j,m)} \leq x) \qquad \text{and} \qquad F_{j,m}(x) = \frac{1}{m} \sum_{i=1}^m 1(R_i^{(j)} \leq x).$$
Note that $F_{j,m}$ is an empirical distribution function of i.i.d. copies of $R^{(j)}$, which has been carefully coupled with the function $\hat F_{j,m}$ produced by the algorithm.

 By the triangle inequality and Proposition \ref{P.empirical_d1} we have that
  \begin{align*}
    E\left[ d_1(\hat F_{j,m}, F_{j} )\right] &\leq E\left[ d_1(\hat F_{j,m},  F_{j,m})\right] + E\left[ d_1(F_{j,m}, F_{j}) \right]  \leq  E\left[ d_1(\hat F_{j,m},  F_{j,m})\right] + K_\alpha m^{-1+1/\alpha}.
 \end{align*}
  To analyze the remaining expectation note that
 \begin{align*}
E\left[ d_1(\hat F_{j,m}, F_{j,m}) \right] &=  E\left[ \int_{-\infty}^\infty | \hat F_{j,m} (x) -  F_{j,m}(x) | dx \right] \\
&\leq \frac{1}{m} \sum_{i=1}^m E\left[ \int_{-\infty}^\infty  \left| 1(\hat R_i^{(j,m)} \leq x ) - 1( R_i^{(j)} \leq x) \right| dx \right] \\
&=  \frac{1}{m} \sum_{i=1}^m E\left[\left| \hat R_i^{(j,m)} - R_i^{(j)} \right|  \right]\\
&= \frac{1}{m} \sum_{i=1}^mE\left[\left| \sum_{r=1}^{N_i} C_{(i,r)} ( \hat F_{j-1,m}^{-1}(U_r^i) - F_{j-1}^{-1}(U_r^i) ) \right|  \right]\\
&\le  E\left[ \sum_{r=1}^N |C_r| \right] E\left[ d_1(\hat F_{j-1,m},F_{j-1}) \right],
\end{align*}
where in the last step we used the fact that $ (N_i, C_{(i,1)}, C_{(i,2)}, \dots)$ is independent of $\left\{ U_r^i \right\}_{r \geq 1}$ and of $\hat F_{j-1,m}$, combined with the explicit representation of the Kantorovich-Rubinstein distance given in \eqref{eq:Explicit}.   The induction hypothesis now gives
\begin{align*}
E\left[ d_1(\hat F_{j,m}, F_{j} )\right] &\leq \rho_1 E\left[ d_1(\hat F_{j-1,m},F_{j-1}) \right] + K_\alpha m^{-1+1/\alpha} \\
&\leq K_\alpha m^{-1+1/\alpha} \rho_1 \sum_{i=0}^{j-1} \rho_1^i  + K_\alpha m^{-1+1/\alpha} \\
&= K_\alpha m^{-1+1/\alpha} \sum_{i=0}^{j} \rho_1^i.
\end{align*}
This completes the proof.
\end{proof}

Note that the proof of Theorem~\ref{T.d1-l1-convergence} implies that  $\hat R_i^{(j,m)} \to R_i^{(j)} = \sum_{r=1}^{N_i} C_{(i,r)} F_{j-1}^{-1}(U_r^i) + Q_i \stackrel{\mathcal{D}}{=} R^{(j)}$ in $L^1$-norm for all fixed $j \in \mathbb{N}$, and hence in distribution. In other words,
\begin{equation} \label{eq:WeakConvergence}
    P\left( \hat R^{(k,m)}_i \le x \right) \to F_k(x) \qquad \text{as } m \to \infty,
\end{equation}
for all $i= 1, 2, \dots, m$, and for any continuity point of $F_k$. This also implies that
\begin{equation} \label{eq:MeansConverge}
E\left[ \hat F_{k,m}(x) \right] = P\left( \hat R^{(k,m)}_1 \le x \right) \to F_k(x) \qquad \text{as } m \to \infty,
\end{equation}
for all continuity points of $F_k$.

Since our algorithm produces a pool $\mathcal P^{(k,m)}$ of $m$ random variables approximately distributed according to $F_k$, it makes sense to use it for estimating expectations related to $R^{(k)}$. In particular, we are interested in estimators of the form in \eqref{eq:hEstimator}. The problem with this kind of estimators is that the random variables in $\mathcal P^{(k,m)}$ are only conditionally independent given $\hat F_{k-1,m}$.

\begin{definition}
We say that $\Theta_n$ is a consistent estimator for $\theta$ if $\Theta_n \stackrel{P}{\to} \theta$ as $n \to \infty$, where $\stackrel{P}{\to}$ denotes convergence in probability.
\end{definition}

Our second theorem shows the consistency of estimators of the form in \eqref{eq:hEstimator} for a broad class of functions.


\begin{theorem} \label{thm:estimator}
    Suppose that the conditions of Proposition \ref{P.MomentsR} are satisfied for some $\beta > 1$. Suppose $h: \mathbb{R} \to \mathbb{R}$  is continuous and $|h(x)| \le C(1+|x|)$ for all $x\in \mathbb R$ and some constant $C>0$. Then, the estimator
$$\frac{1}{m} \sum_{i=1}^m h(\hat R^{(k,m)}_i) = \int_{\mathbb{R}} h(x) d \hat F_{k,m}(x),$$
where $\mathcal{P}^{(k,m)} = \left( \hat R_1^{(k,m)}, \hat R_2^{(k,m)}, \dots, \hat R_m^{(k,m)}\right)$, is a consistent estimator for $E[h(R^{(k)})]$.
\end{theorem}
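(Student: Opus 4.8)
The plan is to reduce the claim to a statement about the Kantorovich-Rubinstein distance already controlled in Theorem \ref{T.d1-l1-convergence}, together with a uniform integrability argument to handle the fact that $h$ has linear (not bounded) growth. First I would write $\int h\, d\hat F_{k,m} - E[h(R^{(k)})] = \left( \int h \, d\hat F_{k,m} - \int h\, d F_k \right)$, so it suffices to show that $\int h\, d\hat F_{k,m} \to \int h\, dF_k$ in probability. Since Theorem \ref{T.d1-l1-convergence} gives $E[d_1(\hat F_{k,m}, F_k)] \to 0$, Markov's inequality yields $d_1(\hat F_{k,m}, F_k) \xrightarrow{P} 0$, which by the equivalence noted after Definition \ref{d.wasserstein} is the same as weak convergence of $\hat F_{k,m}$ to $F_k$ together with convergence of first absolute moments, in probability. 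The natural route is then to invoke the standard fact that $\mu_n \to \mu$ in $d_1$ implies $\int f\, d\mu_n \to \int f\, d\mu$ for every continuous $f$ with $|f(x)| \le C(1+|x|)$; here this must be run along a subsequence argument because the convergence is only in probability.

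Concretely, I would argue by the subsequence criterion for convergence in probability: it is enough to show that every subsequence of $\{\int h\, d\hat F_{k,m}\}_m$ has a further subsequence converging almost surely to $E[h(R^{(k)})]$. Along any subsequence, $E[d_1(\hat F_{k,m},F_k)]\to 0$ still holds, so we may extract a further subsequence along which $d_1(\hat F_{k,m},F_k)\to 0$ almost surely. On the almost sure event where this holds, $\hat F_{k,m}\Rightarrow F_k$ and $\int |x|\, d\hat F_{k,m}(x) \to \int |x|\, dF_k(x)$. Weak convergence plus convergence of the $L^1$ masses is precisely the condition (via e.g. Skorokhod coupling, or a direct truncation estimate) that upgrades $\int g\, d\hat F_{k,m}\to \int g\, dF_k$ from bounded continuous $g$ to all continuous $g$ with at most linear growth. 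Applying this with $g=h$ gives $\int h\, d\hat F_{k,m}\to \int h\, dF_k = E[h(R^{(k)})]$ almost surely along that subsequence, which completes the subsequence argument and hence proves convergence in probability.

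For the truncation step I would make the linear-growth-to-bounded reduction explicit: for $L>0$ write $h = h_L + (h-h_L)$ where $h_L(x) = h((-L)\vee (x\wedge L))$ is bounded and continuous, so $\int h_L\, d\hat F_{k,m}\to\int h_L\, dF_k$ by weak convergence; and bound $\left|\int (h-h_L)\, d\hat F_{k,m}\right| \le 2C\int_{|x|>L}(1+|x|)\, d\hat F_{k,m}(x)$, which is controlled by $E[|R^{(k)}|^\alpha]$-type tail bounds uniformly — here I would use that $\int |x|\, d\hat F_{k,m}\to\int|x|\, dF_k$ to ensure the tail mass $\int_{|x|>L}|x|\, d\hat F_{k,m}(x)$ is eventually uniformly small as $L\to\infty$, so the error term vanishes after letting $m\to\infty$ and then $L\to\infty$.

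The main obstacle is the interplay between the two limits: the convergence $d_1(\hat F_{k,m},F_k)\xrightarrow{P}0$ is only in probability, not almost surely or in $L^1$ of the integrated functional, so one cannot directly take expectations or pass $h$ through the limit; the subsequence device is what makes this rigorous, and the linear growth of $h$ means one genuinely needs the $L^1$-mass convergence packaged inside $d_1$ rather than mere weak convergence. Everything else — Markov's inequality, the $h_L$ truncation, and the boundedness of the extra moments from Proposition \ref{P.MomentsR} — is routine.
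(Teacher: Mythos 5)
Your proof is correct, but it takes a genuinely different route from the paper. You invoke Markov's inequality to get $d_1(\hat F_{k,m},F_k)\xrightarrow{P}0$, then pass to an almost-surely-convergent subsequence and apply the well-known characterization of $W_1$-convergence (weak convergence plus $L^1$-mass convergence implies $\int g\,d\mu_m\to\int g\,d\mu$ for all continuous $g$ of at most linear growth). The paper avoids the subsequence machinery entirely: it works with the same truncation $h_M$, but estimates
$$E\left|\int h\,d\hat F_{k,m}-\int h\,dF_k\right|$$
directly, by (i) bounding the tail contributions $\int_{|x|>M}(1+|x|)\,d\hat F_{k,m}$ through the explicit one-dimensional form $d_1(F,G)=\int|F-G|\,dx$ together with the pointwise convergence $E[\hat F_{k,m}(\pm M)]\to F_k(\pm M)$ from \eqref{eq:MeansConverge}, and (ii) controlling the bounded piece via a quantitative modulus-of-continuity bound $\omega(m^{-\gamma})+K_\epsilon m^\gamma d_1(\hat F_{k,m},F_k)$ with $\gamma<1-1/\alpha$, which is tuned to the rate $m^{-1+1/\alpha}$ of Theorem~\ref{T.d1-l1-convergence}. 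The paper thereby proves $L^1$ convergence of the estimator (strictly stronger than the stated consistency), and the argument is self-contained and quantitative. Your version is shorter and more modular because it outsources the hard part to the standard $W_1$-equivalence theorem, at the price of proving only convergence in probability and requiring the reader to accept that equivalence as a black box; both are valid, and the two approaches reflect the familiar trade-off between invoking a known metric-space fact and re-deriving exactly the piece of it one needs.

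One small point worth tightening in your write-up: when you claim $\int_{|x|>L}|x|\,d\hat F_{k,m}(x)$ is ``eventually uniformly small,'' the justification is that on the almost-sure event, $\int_{|x|\le L}|x|\,d\hat F_{k,m}\to\int_{|x|\le L}|x|\,dF_k$ (choosing $\pm L$ to be continuity points of $F_k$) while $\int|x|\,d\hat F_{k,m}\to\int|x|\,dF_k$, so the tail mass converges to $\int_{|x|>L}|x|\,dF_k$, which is small for $L$ large. This is the uniform-integrability step and deserves to be spelled out; as stated it reads as an assertion rather than an argument.
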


\begin{proof}
%
For any $M>0$, define $h_M(x)$ as
\[
    h_M(x)=h(-M)1(x\le -M)+h(x)1(-M<x\le M)+h(M)1(x>M),
\]
and note that $h_M$ is uniformly continuous.
We then have
\begin{align}
    \left|\int_{\mathbb R} h(x)d\hat F_{k,m}(x)-\int_{\mathbb R} h(x)dF_{k}(x)\right|&\le
    2C\int_{|x| > M}(1+|x|)dF_k(x)+2C\int_{|x| > M} (1+|x|)d\hat F_{k,m}(x)\nonumber\\
    &\quad +\left|\int_{\mathbb R} h_M(x)d\hat F_{k,m}(x)-\int_{\mathbb R}  h_M(x)dF_{k}(x)\right|.\label{eq:truncation}
\end{align}
Fix $\epsilon > 0$ and choose $M_\epsilon > 0$ such that $E\left[ (|R^{(k)}| + 1) 1(|R^{(k)}| > M_\epsilon) \right] \leq \epsilon/(4C)$ and such that $-M_\epsilon$ and $M_\epsilon$ are continuity points of $F_k$. Define  $(\hat R^{(k,m)}, R^{(k)}) = (\hat F_{k,m}^{-1}(U), F_k^{-1}(U))$, where $U$ is a uniform $[0,1]$ random variable independent of $\mathcal{P}^{(k,m)}$. Next, note that $g(x) = 1 + |x|$ is Lipschitz continuous with Lipschitz constant one and therefore
\begin{align*}
\int_{|x| > M_\epsilon} (1+|x|)d\hat F_{k,m}(x) &=  (1+M_\epsilon) \left( \hat F_{k,m}(-M_\epsilon) + 1 - \hat F_{k,m}(M_\epsilon) \right) \\
&\hspace{5mm} + \int_{x < -M_\epsilon} \hat F_{k,m}(x) \, dx +  \int_{x > M_\epsilon} (1- \hat F_{k,m}(x) )dx \\
&\leq (1+M_\epsilon) \left( \hat F_{k,m}(-M_\epsilon) + 1 - \hat F_{k,m}(M_\epsilon) \right) +  d_1(\hat F_{k,m}, F_k) \\
&\hspace{5mm} + \int_{x < -M_\epsilon} F_{k}(x) \, dx +  \int_{x > M_\epsilon} (1- F_{k}(x) )dx \\
&= (1+M_\epsilon) \left(\hat F_{k,m}(-M_\epsilon) - F_k(-M_\epsilon) + F_k(M_\epsilon) - \hat F_{k,m}(M_\epsilon) \right) + d_1(\hat F_{k,m}, F_k) \\
&\hspace{5mm} + E\left[ (|R^{(k)}| + 1) 1(|R^{(k)}| > M_\epsilon) \right].
\end{align*}
Finally, since $h_{M_\epsilon}$ is bounded and uniformly continuous, then $\omega(\delta) = \sup\{ | h_{M_\epsilon}(x) - h_{M_{\epsilon}}(y) | : |x-y| \leq \delta\}$ converges to zero as $\delta \to 0$. Hence, for any $\gamma > 0$,
\begin{align*}
\left| \int_{\mathbb{R}} h_{M_\epsilon}(x) d\hat F_{k,m}(x) - \int_{\mathbb{R}} h_{M_\epsilon}(x) dF_k(x) \right| &\leq E\left[ \left. \left| h_{M_\epsilon}(\hat R^{(k,m)}) - h_{M_\epsilon}(R^{(k)}) \right| \right| \hat F_{k,m} \right] \\
&\leq \omega( m^{-\gamma}) + K_\epsilon E\left[ \left. 1\left( |\hat R^{(k,m)} - R^{(k)} | > m^{-\gamma} \right) \right| \hat F_{k,m} \right] \\
&\leq \omega(m^{-\gamma}) + K_\epsilon m^\gamma d_1(\hat F_{k,m}, F_k),
\end{align*}
where $2K_\epsilon = \sup\{ |h_{M_\epsilon}(x)|: x \in \mathbb{R}\}$. Choose $0 < \gamma < 1 - 1/\alpha$ for the $\alpha \in (1, 2)$ in Theorem  \ref{T.d1-l1-convergence} and combine the previous estimates to obtain
\begin{align*}
&E\left[ \left| \int_{\mathbb{R}} h(x) d \hat F_{k,m}(dx) - \int_{\mathbb{R}} h(x) dF_k(dx) \right| \right] \\
&\leq 2C(1+M_\epsilon) \left( E[\hat F_{k,m}(-M_\epsilon)] - F_k(-M_\epsilon) + F_k(M_\epsilon) - E[\hat F_{k,m}(M_\epsilon)] \right) \\
&\quad + \epsilon  + \omega(m^{-\gamma}) + (2 C + K_\epsilon m^\gamma) E\left[ d_1(\hat F_{k,m}, F_k) \right].
\end{align*}
Since
$$E[\hat F_{k,m}(-M_\epsilon)]  \to F_k(-M_{\epsilon}) \qquad \text{and} \qquad E[\hat F_{k,m}(M_\epsilon)]  \to F_k(M_{\epsilon})$$
by \eqref{eq:MeansConverge}, and $m^\gamma E\left[ d_1(\hat F_{k,m}, F_k) \right] \to 0$ by Theorem~ \ref{T.d1-l1-convergence}, it follows that
$$\limsup_{m \to \infty} E\left[ \left| \int_{\mathbb{R}} h(x) d \hat F_{k,m}(dx) - \int_{\mathbb{R}} h(x) dF_k(dx) \right| \right] \leq \epsilon.$$
Since $\epsilon > 0$ was arbitrary, the convergence in $L^1$, and therefore in probability, follows.
\end{proof}

\section{Numerical examples}\label{sec:numerical}

This last section of the paper gives a numerical example to illustrate the performance of our algorithm. Consider a generic branching vector $(Q, N, C_1, C_2, \dots)$ where the $\{C_i\}_{i \geq 1}$ are i.i.d. and independent of $N$ and $Q$, with $N$ also independent of $Q$.

Figure \ref{fig:numerical1} plots the empirical cumulative distribution function of 1000 samples of $R^{(10}$, i.e., $F_{10,1000}$ in our notation, versus the functions $\hat F_{10,200}$ and $\hat F_{10,1000}$ produced by our algorithm, for the case where the $C_i$ are uniformly distributed in $[0, 0.2]$, $Q$ uniformly distributed in $[0,1]$ and $N$ is a Poisson random variable with mean 3. Note that we cannot compare our results with the true distribution $F_{10}$ since it is not available in closed form. Computing $F_{10,1000}$ required 883.3 seconds using Python with an Intel i7-4700MQ $2.40$ GHz processor and $8$ GB of memory, while computing $\hat F_{10,1000}$ required only 2.1 seconds.  We point out that in applications to information ranking algorithms $E[N]$ can be in the thirties range, which would make the difference in computation time even more impressive.

\begin{figure}[t]
    \centering
    \includegraphics[scale=1.2]{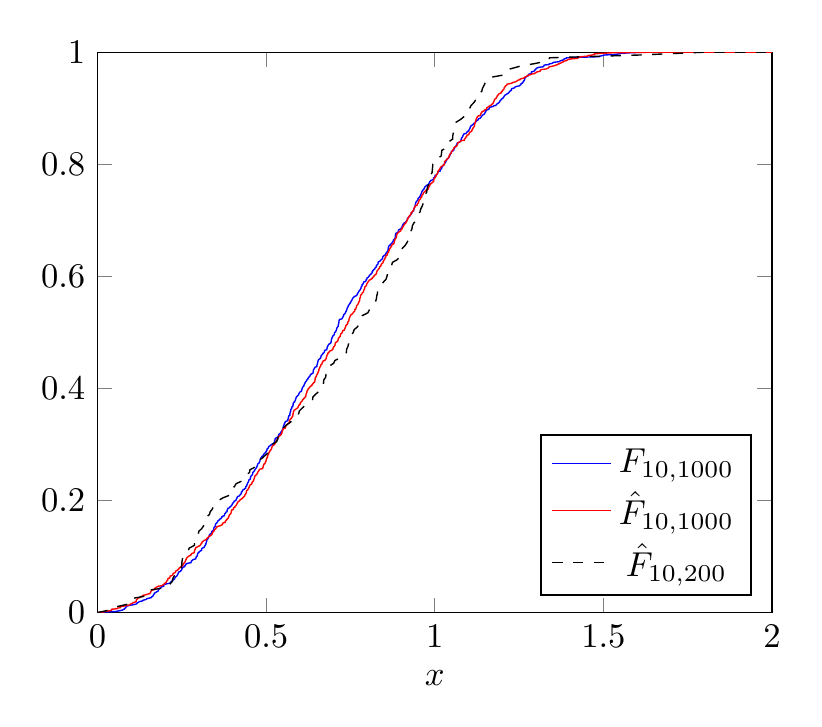}
    \caption{The functions $F_{10,1000}(x)$, $\hat F_{10,200}(x)$ and $\hat F_{10,1000}(x)$.}
    \label{fig:numerical1}
\end{figure}

Our second example plots the tail distribution of the empirical cumulative distribution function of $R^{(10)}$ for 10,000 samples versus the tail of $\hat F_{10,10000}$ for an example where $N$ is a zeta random varialbe with a probability mass function $P(N=k)\propto k^{-2.5}$, $Q$ is an exponential random variable with mean 1, and the $C_i$ have a uniform distribution in $[0,0.5]$. In this case the exact asymptotics for $P(R^{(k)} > x)$ as $x \to \infty$ are given by
$$P(R^{(k)} > x) \sim  \frac{(E[C_1] E[Q])^\alpha}{(1-\rho_1)^\alpha} \sum_{j=0}^{k} \rho_\alpha^j (1-\rho_1^{k-j})^\alpha P(N > x),$$
where $P(N > x) = x^{-\alpha} L(x)$ is regularly varying (see Lemma 5.1 in \cite{Jel_Olv_10}), which reduces for the specific distributions we have chosen to
$$G_{10}(x) \triangleq \frac{(0.25)^{2.5}}{(1 - (0.49))^{2.5}} \sum_{j=0}^{10}(0.07)^{j}(1-(0.49)^{10-j})^{2.5}      P(N > x) = (0.365) \, P(N>x).$$
Figure \ref{fig:numerical2} plots the complementary distributions of $F_{10,10000}$, $\hat F_{10,10000}$ and compares them to $G$. We can see that the tails of both $F_{10,10000}$ and $\hat F_{10,10000}$ approach the asymptotic roughly at the same time.

\begin{figure}[t]
    \centering
    \includegraphics[scale=1.2]{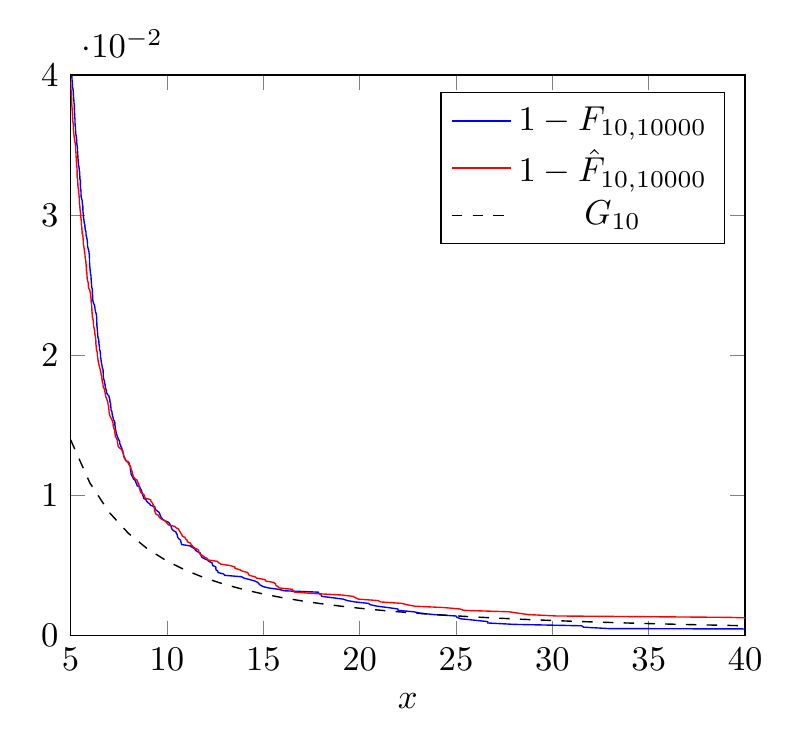}
    \caption{The functions $1 - F_{10,10000}(x)$, $1- \hat F_{10,10000}(x)$ and $G_{10}(x)$, where $G_{10}$ is evaluated only at integer values of $x$ and linearly interpolated in between.}
    \label{fig:numerical2}
\end{figure}

\bibliographystyle{plain}
\bibliography{bootstrapref}

\begin{thebibliography}{10}

\bibitem{Als_Big_Mei_10}
G.~Alsmeyer, J.D. Biggins, and M.~Meiners.
\newblock The functional equation of the smoothing transform.
\newblock {\em Ann. Probab.}, 40(5):2069--2105, 2012.

\bibitem{Alsm_Mein_10b}
G.~Alsmeyer and M.~Meiners.
\newblock Fixed points of the smoothing transform: {T}wo-sided solutions.
\newblock {\em Probab. Theory Rel.}, 155(1-2):165--199, 2013.

\bibitem{Chen_Litv_Olv_14}
N.~Chen, N.~Litvak, and M.~Olvera-Cravioto.
\newblock Ranking algorithms on directed configuration networks.
\newblock {\em ArXiv:1409.7443}, pages 1--39, 2014.

\bibitem{del1999central}
Eustasio del Barrio, Evarist Gin{\'e}, and Carlos Matr{\'a}n.
\newblock Central limit theorems for the wasserstein distance between the
  empirical and the true distributions.
\newblock {\em Annals of Probability}, pages 1009--1071, 1999.

\bibitem{bootstrap1993efron}
B.~Efron and R.~J. Tibshirani.
\newblock {\em {An introductin to the bootstrap}}.
\newblock 1993.

\bibitem{Fill_Jan_01}
J.A. Fill and S.~Janson.
\newblock Approximating the limiting {Q}uicksort distribution.
\newblock {\em Random Structures Algorithms}, 19(3-4):376--406, 2001.

\bibitem{Jel_Olv_10}
P.R. Jelenkovi\'c and M.~Olvera-Cravioto.
\newblock Information ranking and power laws on trees.
\newblock {\em Adv. Appl. Prob.}, 42(4):1057--1093, 2010.

\bibitem{Jel_Olv_12b}
P.R. Jelenkovi\'c and M.~Olvera-Cravioto.
\newblock Implicit renewal theorem for trees with general weights.
\newblock {\em Stochastic Process. Appl.}, 122(9):3209--3238, 2012.

\bibitem{Jel_Olv_12a}
P.R. Jelenkovi\'c and M.~Olvera-Cravioto.
\newblock Implicit renewal theory and power tails on trees.
\newblock {\em Adv. Appl. Prob.}, 44(2):528--561, 2012.

\bibitem{Olvera_12}
M.~Olvera-Cravioto.
\newblock Tail behavior of solutions of linear recursions on trees.
\newblock {\em Stochastic Process. Appl.}, 122(4):1777--1807, 2012.

\bibitem{Rosler_91}
U.~R\"{o}sler.
\newblock A limit theorem for {``Quicksort"}.
\newblock {\em RAIRO Theor. Inform. Appl.}, 25:85--100, 1991.

\bibitem{Ros_Rus_01}
U.~R\"{o}sler and L.~R\"{u}schendorf.
\newblock The contraction method for recursive algorithms.
\newblock {\em Algorithmica}, 29(1-2):3--33, 2001.

\bibitem{Villani_2009}
C.~Villani.
\newblock {\em Optimal transport, old and new}.
\newblock Springer, New York, 2009.

\bibitem{Volk_Litv_08}
Y.~Volkovich and N.~Litvak.
\newblock Asymptotic analysis for personalized web search.
\newblock {\em Adv. Appl. Prob.}, 42(2):577--604, 2010.

\end{thebibliography}

\end{document}